\newcommand{\pp}{\mathbb{P}}
\newcommand{\R}{\mathbb{R}}
\newcommand{\Exp}{\mathbb{E}}
\newcommand{\F}{\mathcal{F}}
\newcommand{\Sc}{\mathcal{S}}
\newcommand{\inpr}[3][]{\left\langle#2 \,,\, #3\right\rangle_{#1}}
\theoremstyle{theorem}
\newtheorem{theorem}{Theorem}[section]
\newtheorem{lemma}[theorem]{Lemma}
\theoremstyle{proposition}
\newtheorem{proposition}[theorem]{Proposition}
\theoremstyle{definition}
\newtheorem{definition}[theorem]{Definition}
\theoremstyle{remark}
\newtheorem{remark}[theorem]{Remark}
\numberwithin{equation}{section}
\title[Invariant Measure]{Invariant Measure for Linear Stochastic PDEs in the space of Tempered distributions}
\author{Arvind Kumar Nath}
\address{Arvind Kumar Nath, Department of Mathematics and Statistics, Indian Institute of Technology Kanpur, Kalyanpur, Kanpur - 208016, India.}
\email{yarvind@iitk.ac.in}
\begin{document}
\begin{abstract}
    In this paper, we first explore exponential stability by using Monotonicity inequality and use this information to obtain the existence of Invariant measure for linear Stochastic PDEs with potential in the space of tempered distributions. The uniqueness of Invariant Measure  follows from Monotonicity inequality.
\end{abstract}

\keywords{$\mathcal{S}^\prime$ valued process, Hermite-Sobolev space, Strong solution, Monotonicity Inequality}
\subjclass[2020]{60H10, 60H15}

\maketitle

\section{introduction}\label{s1:intro}
Let $\Sc$ be the space of real-valued smooth rapidly decreasing functions on $\R^d$ with the  topology given by Schwartz. Under this topology, $\Sc$  is a locally convex space and its  (continuous) dual $\Sc'$ is the space of  tempered distributions (see \cite{MR1681462, MR2296978, MR771478, MR1157815} and the references therein). Let $(\Omega, \F, (\F_t)_t, \pp)$ be a given filtered probability space satisfying the usual conditions (see \cite{MR1121940, MR2020294, MR2560625, MR3236753} and the references therein). In this article, we are interested in the following properties, exponential stability and invariant measures, for linear stochastic partial differential equations (SPDEs) in $\Sc^\prime$ of the form
\begin{equation} \label{SPDE}
    dX_t=(L-\alpha) (X_t)dt + A(X_t)\,dB_t,
\end{equation}
where $\{B_t=(B_t^1,\cdots,B_t^d), t \geq 0\}$ is a standard $d$-dimensional Brownian motion, $\alpha \in \R$, 
  and $L$ and $A = (A_1, A_2, \cdots, A_d)$ are certain linear differential operators defined below (see subsection \ref{S1:operators}). The expresssion $A(X_t)\,dB_t$ means $\sum_{j=1}^dA_j(X_t)\,dB_t^j$.

Exponential stability and invariant measures have been extensively studied in the literature (see \cite{MR0738933,MR1396756,MR2560625,MR0916950} and the references therein). 
Our main results, stated in the subsection \ref{S1:results}, are analogous in nature with the results of \cite[Chapter 6 and 7]{MR2560625}. In \cite{MR2560625}, the authors discuss stability and invariant measures for stochastic differential equations in the Hilbert valued setting, where the driving noise is a Hilbert valued $Q$-Wiener process. In our case, the noise is finite-dimensional, but solutions take values in certain real separable Hilbert spaces, the Hermite-Sobolev spaces (see subsection \ref{s1:topology} below). Throughout this article, solutions to the linear SPDE mentioned above, shall be considered in the notion of strong solutions. Another important distinction with the results of \cite{MR2560625} is the following. The solutions take values in a certain Hermite-Sobolev space and the equality in \eqref{SPDE} needs to be considered in a `lower regularity' Hermite-Sobolev space (see Theorem \ref{existence-uniqueness-solution} below).

\subsection{Topology on $\Sc$}\label{s1:topology}

It is known that $\Sc$ is a nuclear space and  that $\Sc = \cap_{p \in \R} \Sc_p$,  where the real separable Hilbert spaces $\Sc_p$, the Hermite-Sobolev spaces, are equipped with norms $\| \cdot \|_p$, defined by the inner products 
\[\inpr[p]{f}{g}  = \sum_{n = 0}^\infty \sum_{k : |k| = n} (2 |k| + d) ^ {2p} \inpr[]{f}{h _ k} \inpr{g}{h _ k}, \; f , g \in \Sc.\]
Here,  $ k = ( k _ 1 , k _ 2 , \cdots , k_d )$ denotes multi-indices with $ k _ j $'s being non-negative integers, $ |k| = k_1 + k_2 + \cdots + k_d $ and $ \inpr[]{\cdot}{\cdot}$ is the usual inner product in $L^2( \R^d , dx)$. 
In the above, $\{ h_k \}_{ k = 0 } ^ \infty$ is an orthonormal basis in $L^2(\R^d, dx)$ given by the Hermite functions (for $d=1, h_k(t) = (2^k k!\sqrt{\pi})^{-\frac{1}{2}}\exp{\{-\frac{t^2}{2}}\}H_k(t)$, with $H_k(t)$ being the Hermite polynomials (See  \cite{MR771478})).  We also  have $\Sc'=\cup_{p\in\R}\Sc_{p}$. Note that for all $p \in \R$, $\Sc_p$ is the completion of $\Sc$ in $\|\cdot\|_p$ and $\Sc_p'$ is isometrically isomorphic  with $\Sc_{-p}\; \forall p> 0$.

\subsection{Linear operators $L$ and $A$}\label{S1:operators}

 Let $\sigma=(\sigma_{ij})$ be  $\R^{d\times d}$ - valued  and $b=(b_1,b_2,\cdots,b_d)$ be $\R^d$ - valued  smooth functions with bounded derivatives defined on $\R^d$.

Let $L:\Sc' \to \Sc'$ and $ A_i:\Sc' \to \Sc',\; \forall i=1,2,\cdots,d $ be linear operators defined as
\begin{align} \label{Operators}
    A_i(\phi)=& -\sum_{j=1}^d\partial_j(\sigma_{ji}\phi) & L(\phi)=\frac{1}{2}\sum_{i,j=1}^d\partial^2_{ij}((\sigma \sigma^t)_{ij}\phi)+\sum_{i=1}^d\partial_i (b_i\phi) \;,\forall \phi\in\Sc.
\end{align}

 The distributional derivative operators $\partial_i: \Sc_p \to \Sc_{p}, i = 1, \cdots, d$ are densely defined closed linear unbounded operators. The following relation is well known \cite[Chapter 1]{MR1215939},
 \[ \partial_i h _ n = \sqrt{\frac{n_i}{2}} h_{n-e_i} - \sqrt{\frac{n_i+1}{2}}h _ {n+e_i}, \; \forall \; n=(n_1,n_2,\cdots,n_d), \; n_i \geq 0\] 
 where $\{e_1, \cdots,e_d\}$ is the standard basis in $\R^d$. Consequently, we have $\partial_i: \Sc_p \to \Sc_{p-\frac{1}{2}}$ are bounded linear operators.

 Let $\sigma_{ij}: \R \to \R, i,j = 1, \cdots, d$ be smooth functions with bounded derivatives. The following relation is well known \cite[Chapter 1]{MR1215939},
 \[ x_i h _ n = \sqrt{\frac{n_i}{2}} h_{n-e_i} + \sqrt{\frac{n_i+1}{2}}h _ {n+e_i}, \; \forall \; n=(n_1,n_2,\cdots,n_d), \; n_i \geq 0\] 
 By using above relation, it can be shown that the multiplication operators $\sigma_{ij}: \Sc_p \to \Sc_{p-1}, i,j = 1, \cdots, d$  are bounded linear operators \cite[Proposition 3.2]{MR2373102}.
 
 If $ \phi\in \Sc_p$, then $A_i(\phi), L(\phi)\in \Sc_{p-2} , \forall i$. In fact, $\left\|A(\phi)\right\|_{HS(p-2)}^2 := \sum_{i=0}^d\|A_i\phi\|_{p-2}^2 \leq C_1 \left\|\phi\right\|_p^2$ and $\left\|L(\phi)\right\|_{p-2} \leq C_2 \left\|\phi\right\|_{p}, \phi\in\Sc$  for some positive constants $C_1$ and $C_2$ (see  \cite{MR2373102}).
 
Now we recall the notion of Monotonicity inequality.

\begin{definition}
    We say that the  pair of linear operators $(L, A)$ satisfies the Monotonicity inequality in $\|\cdot\|_p$, if we have
\begin{equation}\label{Monotoniticity-inequality}
2\inpr[p]{\phi}{L\phi} + \|A\phi\|_{HS(p)}^2\leq C\|\phi\|^2_p,\quad \forall \phi\in \Sc
\end{equation}
where $C=C(\sigma,b,p,d) > 0$ is a positive constant.
\end{definition}

The Monotonicity inequality for $(L, A)$ has been proved explicitly in the following cases.

    \begin{theorem}
    For every $p\in \R$, the monotonicity inequality \eqref{Monotoniticity-inequality} for the pair $(L,A)$ holds for the following cases
    \begin{enumerate}
        \item \cite[Theorem 2.1]{MR2590157}. $\sigma \in \R^{ d \times d}$ and $b \in \R^d$
        \item\cite[Theorem 4.6]{MR3331916}. $\sigma\in \R^{d\times d}$ and $b(x):=b_0+Mx$ on $\R^d$ for fixed $b_0\in \R^d, M\in\R^{d\times d}$.
    \end{enumerate}
\end{theorem}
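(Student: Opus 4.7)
My plan is to prove both inequalities by direct computation in the Hermite basis, exploiting the fact that for constant (or affine) coefficients the operators $L$, $A_i$ act as explicit linear combinations of shift operators on the Hermite functions. Specifically, I would expand $\phi=\sum_n c_n h_n$ with $c_n=\inpr[]{\phi}{h_n}$, note that $\|\phi\|_p^2=\sum_n (2|n|+d)^{2p}c_n^2$, and use the two recurrence relations in Subsection \ref{S1:operators} to write $\partial_i h_n$ and $x_j h_n$ as combinations of $h_{n\pm e_i}$. Composing these, $L\phi$ and $A_i\phi$ become explicit weighted sums over index shifts of size $\leq 2$.

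For case (1), since $\sigma$ and $b$ are constants, $A_i\phi=-\sum_j \sigma_{ji}\partial_j\phi$ is purely differential, so
\[
\|A\phi\|_{HS(p)}^2=\sum_{j,k}(\sigma\sigma^t)_{jk}\inpr[p]{\partial_j\phi}{\partial_k\phi},
\]
while $2\inpr[p]{\phi}{L\phi}=\sum_{i,j}(\sigma\sigma^t)_{ij}\inpr[p]{\phi}{\partial^2_{ij}\phi}+2\sum_i b_i\inpr[p]{\phi}{\partial_i\phi}$. The crucial algebraic fact is that in the unweighted $L^2$ inner product, integration by parts gives $\inpr[0]{\phi}{\partial^2_{ij}\phi}+\inpr[0]{\partial_j\phi}{\partial_i\phi}=0$ and $\inpr[0]{\phi}{\partial_i\phi}=0$. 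In the weighted norm these identities fail only by terms arising from the difference between adjacent weights, which I would control using the elementary estimate $|(2(|n|+m)+d)^{2p}-(2|n|+d)^{2p}|\leq C_{m,p}(2|n|+d)^{2p-1}$ for $|m|\leq 2$. A Cauchy--Schwarz step on the expansions then produces
\[
\bigl|\inpr[p]{\phi}{\partial^2_{ij}\phi}+\inpr[p]{\partial_j\phi}{\partial_i\phi}\bigr|+\bigl|\inpr[p]{\phi}{\partial_i\phi}\bigr|\leq C\|\phi\|_p^2,
\]
and summing with the coefficients $(\sigma\sigma^t)_{ij}$ and $b_i$ gives the monotonicity inequality with a constant depending on $\sigma$, $b$, $p$, $d$.

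For case (2), the extra linear piece of the drift contributes $\sum_{i,j}M_{ij}\partial_i(x_j\phi)$ to $L\phi$. Using both recurrences simultaneously, $\partial_i(x_j h_n)$ is an explicit combination of $h_{n+\alpha}$ with $|\alpha|\leq 2$ whose coefficients grow at most like $\sqrt{|n|}$. Consequently $\inpr[p]{\phi}{\partial_i(x_j\phi)}$ is a weighted double sum in $n$ whose ``diagonal'' contribution cancels (again by the $L^2$ identity $\inpr[0]{\phi}{\partial_i(x_j\phi)}+\inpr[0]{\partial_i\phi}{x_j\phi}+\inpr[0]{\phi}{\delta_{ij}\phi}=0$, up to a bounded remainder), while the off-diagonal discrepancy is again controlled by the weight-comparison estimate. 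The $b_0$-piece of the drift is treated exactly as in case (1).

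The main obstacle is the bookkeeping: one must track several index-shifted Hermite coefficients, verify that the top-order terms cancel \emph{exactly} (so that only weight-differences of order $(2|n|+d)^{2p-1}$ survive), and establish the weight-comparison lemma uniformly in $n$ with a constant depending polynomially on $p\in\R$. Once this lemma is in place, assembling the finitely many shift contributions produced by the polynomial coefficients is mechanical, and one recovers the inequality \eqref{Monotoniticity-inequality} in the form stated, matching the results of \cite{MR2590157} and \cite{MR3331916}.
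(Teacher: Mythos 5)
The paper does not actually prove this theorem: it is quoted from the literature, with the two cases attributed to \cite[Theorem 2.1]{MR2590157} and \cite[Theorem 4.6]{MR3331916} respectively. Your sketch essentially reconstructs the proof strategy of those references --- expand in the Hermite basis, use the recurrences for $\partial_i h_n$ and $x_i h_n$ to reduce everything to finitely many index shifts, observe that the top-order contributions of $2\inpr[p]{\phi}{L\phi}$ and $\|A\phi\|^2_{HS(p)}$ cancel exactly as they do in $L^2$, and absorb the surviving discrepancy via the weight-comparison estimate $|(2(|n|+m)+d)^{2p}-(2|n|+d)^{2p}|\leq C_{m,p}(2|n|+d)^{2p-1}$ together with Cauchy--Schwarz. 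This is the right mechanism and the constants close: a shift coefficient of size $O(|n|)$ times a weight discrepancy of size $O((2|n|+d)^{2p-1})$ is exactly $O((2|n|+d)^{2p})$, which Cauchy--Schwarz converts into $C\|\phi\|_p^2$.

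Two details in your write-up need repair, though neither is fatal. First, the coefficients of $\partial_i(x_j h_n)$ grow like $|n|$, not like $\sqrt{|n|}$: each of the two recurrences contributes a factor of order $\sqrt{|n|}$, and they multiply. The argument still closes only because the weight-comparison lemma gains a \emph{full} power $(2|n|+d)^{-1}$; you should state the growth rate correctly so the reader can see the estimate is borderline rather than comfortable. Second, your displayed $L^2$ identity for the linear-drift term is wrong as written: integration by parts gives $\inpr[0]{\phi}{\partial_i(x_j\phi)}+\inpr[0]{\partial_i\phi}{x_j\phi}=0$ already, so adding $\delta_{ij}\|\phi\|_0^2$ does not produce zero. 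The identity you actually need is $2\inpr[0]{\phi}{\partial_i(x_j\phi)}=\delta_{ij}\|\phi\|_0^2$, obtained by combining integration by parts with the Leibniz rule $\partial_i(x_j\phi)=\delta_{ij}\phi+x_j\partial_i\phi$; in the weighted setting this symmetrization yields the diagonal term $\tfrac{\delta_{ij}}{2}\|\phi\|_p^2$ plus an off-diagonal remainder controlled as above. With these corrections, and with the deferred bookkeeping actually carried out, your outline is a valid proof matching the cited results.
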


  In the rest of the article, we assume that $p \in \R$ is fixed and
  \begin{equation}\tag{Hyp}\label{Hyp}
\text{the pair of linear operators } (L, A) \text{ satisfies the Monotonicity inequality in } \|\cdot\|_p \text{ and } \|\cdot\|_{p-2}.
  \end{equation}
We shall denote the relevant constant appearing in the Monotonicity inequality in $\|\cdot\|_{p-2}$ by $C_0$. By \eqref{Hyp}, the pair $(L - \alpha, A)$ also satisfies the Monotonicity inequality in $\|\cdot\|_{p-2}$. As a consequence of \cite[Theorem 1]{MR2479730}, the next result follows.

  \begin{theorem}[{\cite[Theorem 1]{MR2479730}}]\label{existence-uniqueness-solution}
      Under \eqref{Hyp}, the SPDE \eqref{SPDE} has an unique $ \Sc_p$-valued strong solution $\{X_t^x, t\geq 0\}$ for every deterministic initial condition $X_0=x \in \Sc_p$, where the equality holds in $\Sc_{p-2}$.
  \end{theorem}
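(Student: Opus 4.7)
The plan is to apply \cite[Theorem 1]{MR2479730}, which provides existence and uniqueness of $\Sc_p$-valued strong solutions to SPDEs in the Hermite-Sobolev framework driven by finite-dimensional Brownian noise, provided (i) the drift and diffusion coefficients are bounded linear maps between consecutive Hermite-Sobolev spaces of appropriate orders, and (ii) a Monotonicity-type coercivity inequality holds in the larger space where the SPDE is interpreted.

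First, I would verify the required mapping properties of the coefficients. The estimates recalled in Subsection~\ref{S1:operators} give $\|L(\phi)\|_{p-2} \leq C_2 \|\phi\|_p$ and $\|A(\phi)\|_{HS(p-2)}^2 \leq C_1 \|\phi\|_p^2$ for $\phi \in \Sc$, and the continuous embedding $\Sc_p \hookrightarrow \Sc_{p-2}$ ensures that the shifted operator $L - \alpha$ is likewise a bounded linear map $\Sc_p \to \Sc_{p-2}$. Consequently, $L - \alpha$ and $A = (A_1, \dots, A_d)$ qualify as admissible coefficients in the setting of \cite{MR2479730}.

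Second, I would verify the Monotonicity inequality for the pair $(L - \alpha, A)$ in $\|\cdot\|_{p-2}$. This is essentially immediate from \eqref{Hyp}: subtracting $2\alpha \|\phi\|^2_{p-2}$ from both sides of the Monotonicity inequality for $(L, A)$ yields
\[ 2 \inpr[p-2]{\phi}{(L-\alpha)\phi} + \|A\phi\|_{HS(p-2)}^2 \leq (C_0 - 2\alpha)\|\phi\|^2_{p-2}, \quad \forall \phi \in \Sc, \]
so the shifted pair satisfies the Monotonicity inequality in $\|\cdot\|_{p-2}$ with constant $C_0 - 2\alpha$.

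Third, with the coefficients in the correct function-analytic setting and the coercivity condition verified, I would invoke \cite[Theorem 1]{MR2479730} directly, which for every deterministic $x \in \Sc_p$ produces a unique $\Sc_p$-valued strong solution $\{X_t^x, t \geq 0\}$ of \eqref{SPDE}, with the equation interpreted in $\Sc_{p-2}$. There is no substantial obstacle here: the proof amounts to checking that the hypotheses of an existing abstract result are met, with the mapping properties coming from the bounds already cited in the paper and the coercivity inequality being a trivial shift of the one assumed in \eqref{Hyp}.
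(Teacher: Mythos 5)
Your proposal matches the paper's treatment: the paper likewise notes that \eqref{Hyp} immediately gives the Monotonicity inequality for the shifted pair $(L-\alpha, A)$ in $\|\cdot\|_{p-2}$ and then invokes \cite[Theorem 1]{MR2479730}, relying on the operator bounds $\|L(\phi)\|_{p-2} \leq C_2\|\phi\|_p$ and $\|A(\phi)\|_{HS(p-2)}^2 \leq C_1\|\phi\|_p^2$ already recalled in Subsection \ref{S1:operators}. Your extra verification steps are exactly the checks the paper leaves implicit, so the argument is correct and essentially identical in approach.
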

  
By standard arguments (see, for example, \cite[Theorem 4.8]{MR2560625}), it follows that the solution $\{X_t^x, t\geq0\}$ is a Markov process, and the corresponding semigroup $\{P_t, t\geq 0\} $ has Feller Property.

\subsection{Main Results: Exponential Stability and Invariant Measures}\label{S1:results}

In this subsection, we introduce the notions of exponential stability and invariant measures used in this article and then state the main results. As pointed out in the introduction, these results are analogous to the results of \cite[Chapter 6 and 7]{MR2560625}. The next definition is similar to \cite[Definition 6.1]{MR2560625}.

\begin{definition}
    Fix $q\leq p$. We say that the $\Sc_p$- valued  solution $\{X_t^x,t \geq 0\}$ of \eqref{SPDE} is exponentially stable in $\Sc_q$  in mean square sense if there exist positive constants $c, \beta$ such that 
    \[\Exp\|X^x_t\|^2_q\leq c\|x\|^2_qe^{-\beta t},\; \forall x\in \Sc_p\]
\end{definition}

Under the above consideration, exponential stability has been shown in the following proposition.
\begin{proposition}\label{exp stability}
     Under \eqref{Hyp}, the $\Sc_p$- valued solution of  SPDE \eqref{SPDE} is exponentially stable in 
    $\Sc_{p-2}$,  if $2\alpha > C_0$.
\end{proposition}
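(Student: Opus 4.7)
The plan is to apply the Itô formula to $\|X_t^x\|_{p-2}^2$, exploit the Monotonicity inequality in $\|\cdot\|_{p-2}$ to collapse the drift plus quadratic-variation contribution, and then close the estimate with Gr\"onwall. Concretely, since by Theorem \ref{existence-uniqueness-solution} the $\Sc_p$-valued solution satisfies the equation in the lower-regularity space $\Sc_{p-2}$, the function $\phi\mapsto \|\phi\|_{p-2}^2$ is a natural $C^2$ function on $\Sc_{p-2}$ whose first and second derivatives at $\phi$ are $2\inpr[p-2]{\phi}{\cdot}$ and $2\inpr[p-2]{\cdot}{\cdot}$ respectively. Applying the infinite-dimensional Itô formula in this setting yields
\begin{equation*}
\|X_t^x\|_{p-2}^2 = \|x\|_{p-2}^2 + \int_0^t \Bigl(2\inpr[p-2]{X_s^x}{(L-\alpha)X_s^x} + \|A(X_s^x)\|_{HS(p-2)}^2\Bigr)\,ds + M_t,
\end{equation*}
where $M_t = 2\sum_{j=1}^d \int_0^t \inpr[p-2]{X_s^x}{A_j(X_s^x)}\,dB_s^j$ is a local martingale.

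The second step is to invoke the Monotonicity inequality \eqref{Monotoniticity-inequality} in $\|\cdot\|_{p-2}$ for the pair $(L,A)$, which gives pointwise in $s$
\begin{equation*}
2\inpr[p-2]{X_s^x}{LX_s^x} + \|A(X_s^x)\|_{HS(p-2)}^2 \leq C_0 \|X_s^x\|_{p-2}^2.
\end{equation*}
Combined with the $-2\alpha\|X_s^x\|_{p-2}^2$ coming from the $-\alpha$ shift, the integrand above is bounded by $(C_0-2\alpha)\|X_s^x\|_{p-2}^2$. Taking expectation (after a standard localization argument to kill $M_t$) and using the boundedness of $\sigma,b$-type controls on $L,A$ to ensure $\Exp\int_0^{t\wedge\tau_n}\|X_s^x\|_{p-2}^2\,ds<\infty$, Fatou/monotone convergence produces
\begin{equation*}
\Exp\|X_t^x\|_{p-2}^2 \leq \|x\|_{p-2}^2 + (C_0-2\alpha)\int_0^t \Exp\|X_s^x\|_{p-2}^2\,ds.
\end{equation*}

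Gr\"onwall's lemma then yields $\Exp\|X_t^x\|_{p-2}^2 \leq \|x\|_{p-2}^2\, e^{(C_0-2\alpha)t}$, and setting $\beta := 2\alpha - C_0 > 0$ (by hypothesis) and $c:=1$ gives the desired exponential decay in $\Sc_{p-2}$ for every $x\in\Sc_p$; the bound uses only $\|x\|_{p-2}$, which is finite and in fact dominated by $\|x\|_p$ since $p-2\leq p$. The main technical point to handle carefully is the justification of Itô's formula in this Hermite--Sobolev setting together with the localization that promotes $M_t$ from a local to a true martingale; this is standard once one notes that the process takes values in $\Sc_p$ while the equation and Itô expansion are read in $\Sc_{p-2}$, so the pairing $\inpr[p-2]{X_s^x}{A_j(X_s^x)}$ is controlled via the bounded operator bounds $\|A(\phi)\|_{HS(p-2)}\le C\|\phi\|_p$ recalled before Definition of Monotonicity.
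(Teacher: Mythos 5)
Your proposal is correct and follows essentially the same route as the paper: apply It\^o's formula to $\|X_t^x\|_{p-2}^2$ in the lower-regularity space $\Sc_{p-2}$, use the Monotonicity inequality for $(L-\alpha,A)$ to bound the drift-plus-quadratic-variation term by $(C_0-2\alpha)\|X_s^x\|_{p-2}^2$, take expectations, and conclude by Gr\"onwall. The extra care you take with localization of the stochastic integral is a detail the paper leaves implicit, but the argument is the same.
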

    
Now we introduce the concept of  Invariant measure  for the SPDE \eqref{SPDE}. This is analogous to \cite[Definition 7.4]{MR2560625}.

\begin{definition}
Fix $q \leq p$. We say that a probability measure $\mu$ on $\Sc_q$ is invariant for the SPDE \eqref{SPDE} with the related Feller semigroup  $\{P_t, t\geq 0\} $, if for all Borel sets  $A$ in $\Sc_q$,
    \[\mu(A)=\int_{\Sc_q}P(t,y,A)\mu(dy)\]
    or equivalently, since $\Sc_q$ is a Polish space, then for  all bounded continuous real valued function $f$ on $\Sc_q$,
    \[\int_{ \Sc_q }( P_t f ) d \mu = \int_{ \Sc_q } f  d \mu . \] 
\end{definition}

 We now state the main results in next two theorems.
 
\begin{theorem}\label{unique-in-q-2}
    Fix $q < p$. Suppose \eqref{Hyp} holds and $2\alpha > C_0$. Then, there exists a unique invariant measure $ \mu $ on $\Sc_{ q - 2 }$ for the SPDE \eqref{SPDE}, provided the pair $(L - \alpha, A)$ satisfies the Monotonicity inequality in $\|\cdot\|_{q - 2}$.
\end{theorem}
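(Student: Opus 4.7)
The plan is to follow the two-step strategy of \cite[Chapter 6]{MR2560625}: prove existence of an invariant measure on $\Sc_{q-2}$ by a Krylov--Bogoliubov tightness argument that exploits the exponential decay in the stronger norm $\|\cdot\|_{p-2}$, and prove uniqueness by converting the Monotonicity inequality in $\|\cdot\|_{q-2}$ into a mean-square contraction of the transition semigroup.

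\textbf{Existence.} I would fix any $x \in \Sc_p$ and define the time-averaged laws
\[\mu_T(B) := \frac{1}{T}\int_0^T P(t,x,B)\,dt,\qquad B \in \mathcal{B}(\Sc_{q-2}).\]
By Proposition \ref{exp stability}, $\sup_{t\ge 0}\Exp\|X_t^x\|_{p-2}^2 \le c\|x\|_{p-2}^2$, so Chebyshev's inequality gives $\mu_T\{y:\|y\|_{p-2} > R\} \le c\|x\|_{p-2}^2/R^2$ uniformly in $T$. Since $p-2 > q-2$, the inclusion $\Sc_{p-2} \hookrightarrow \Sc_{q-2}$ is compact (the singular values $(2|k|+d)^{q-p}$ of the inclusion tend to zero), so $\|\cdot\|_{p-2}$-bounded sets are relatively compact in $\Sc_{q-2}$. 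Thus $\{\mu_T\}_{T > 0}$ is tight on $\Sc_{q-2}$, and by the Feller property of $\{P_t\}$ any weak subsequential limit is an invariant probability measure on $\Sc_{q-2}$.

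\textbf{Uniqueness.} By linearity, the difference $Z_t := X_t^x - X_t^y$ of two solutions solves \eqref{SPDE} with $Z_0 = x-y$. Applying Ito's formula to $\|Z_t\|_{q-2}^2$ and using the Monotonicity inequality for $(L-\alpha,A)$ in $\|\cdot\|_{q-2}$ yields, exactly as in the proof of Proposition \ref{exp stability},
\[\Exp\|X_t^x - X_t^y\|_{q-2}^2 \le \|x-y\|_{q-2}^2\, e^{-\beta' t}\]
for some $\beta' > 0$, once one checks that the effective Monotonicity constant of $(L-\alpha,A)$ in $\|\cdot\|_{q-2}$ is negative under $2\alpha > C_0$. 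Consequently for every bounded Lipschitz $f:\Sc_{q-2}\to\R$ one has $|P_tf(x)-P_tf(y)|\to 0$ pointwise as $t \to \infty$. If $\mu_1,\mu_2$ are both invariant on $\Sc_{q-2}$, invariance gives
\[\int f\,d\mu_1 - \int f\,d\mu_2 = \iint \bigl(P_tf(x) - P_tf(y)\bigr)\,\mu_1(dx)\mu_2(dy) \longrightarrow 0\]
by dominated convergence (the integrand is bounded by $2\|f\|_\infty$). Since bounded Lipschitz functions are measure-determining on the Polish space $\Sc_{q-2}$, this forces $\mu_1 = \mu_2$.

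\textbf{Main obstacle.} The principal hurdle is that Theorem \ref{existence-uniqueness-solution} produces the semigroup $P_t$ and its Feller property only for deterministic initial data in $\Sc_p$, whereas the invariant measure is to be defined on the strictly larger space $\Sc_{q-2}$. One must therefore either verify that the candidate invariant measure constructed above is supported on $\Sc_p$ (which the approximating measures $\mu_T$ indeed are) or extend $P_t$ continuously to $\Sc_{q-2}$ using an approximation procedure. The assumption that the Monotonicity inequality holds in $\|\cdot\|_{q-2}$ is precisely what enables this extension: it closes the Ito computation in the weaker norm and controls the errors when approximating $\Sc_{q-2}$-data by elements of $\Sc_p$. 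A secondary point requiring care is the verification that the effective contraction constant $\beta'$ in the uniqueness step is indeed positive under the hypothesis $2\alpha > C_0$, which presumably requires either that the Monotonicity constant in $\|\cdot\|_{q-2}$ is dominated by $C_0$ or a direct rerun of the proof of Proposition \ref{exp stability} at the level $q-2$.
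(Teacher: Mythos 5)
Your proposal follows essentially the same route as the paper: Krylov--Bogoliubov time averages, Chebyshev's inequality combined with the exponential decay of $\Exp\|X_t^x\|_{p-2}^2$ from Proposition \ref{exp stability}, the compact embedding $\Sc_{p-2}\hookrightarrow\Sc_{q-2}$ (Lemma \ref{compact-emmbadding}) to get tightness and hence existence, a mean-square contraction argument for uniqueness (which the paper delegates to \cite[Theorem 7.13]{MR2560625}), and the Monotonicity inequality in $\|\cdot\|_{q-2}$ invoked precisely to start strong solutions from $\mu$-distributed initial data in $\Sc_{q-2}$. The sign issue you flag for the contraction constant at level $q-2$ can be handled by running the contraction in $\|\cdot\|_{p-2}$, where the hypothesis $2\alpha>C_0$ applies, and then using $\|\cdot\|_{q-2}\leq\|\cdot\|_{p-2}$.
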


The proofs of the results have been discussed in Section 2.

\section{Proofs of Main results}\label{S2:proofs}
In this section, we discuss the proofs.

\begin{proof}[Proof of Proposition \ref{exp stability}]
    Let $\{X_t^x,t \geq 0\}$ be the $\Sc_p$- valued  solution of SPDE \eqref{SPDE}, we have
    \[X_t^x = x+ \int_0^t\left ( L-\alpha\right)(X^x_s)ds +\int^t_0 A(X^x_s) \, dB_s\] 
     and the equality holds in $\Sc_{p-2}$. By applying It\^o rule, Monotonicity inequality and  taking expectation,
   \begin{align*}
      \Exp \left \| X_t^x \right \|_{p-2}^2 = &  \left\| x \right\|_{p-2}^2 + \Exp\int_0^t\left ( 2\inpr[p-2]{X^x_s}{(L-\alpha)( X^x_s)}+ \left\| A  X^x_s \right\|_{HS(p-2)}^2 \right) ds \\
      =&  \left\| x \right\|_{p-2}^2 + \int_0^t \Exp \left ( 2\inpr[p-2]{X^x_s}{(L-\alpha)( X^x_s)}+ \left\|A X^x_s\right\|_{HS(p-2)}^2\right)ds\\
      \leq&  \left\| x \right\|_{p-2}^2 +\left( C_0-2 \alpha \right)  \int_0^t  \Exp  \left\|  X^x_s \right\|_{p-2}^2 ds
    \end{align*}
    By Gronwall's inequality, we have
    \begin{equation}\label{L2-norm-bound}
       \Exp \left\|  X_t^x \right\|_{ p - 2 }^2 \leq \left\| x \right\|_{ p-2 }^2 e^{( C_0 - 2 \alpha ) t}. 
    \end{equation}
    Hence $\{X_t^x, t \geq 0\}$ is exponentially stable in $\Sc_{ p - 2 }$.
\end{proof}

We now discuss a few technical lemmas required in the proofs of main results.

\begin{lemma}\label{compact-emmbadding}
    The inclusion $i:\Sc_p \to \Sc_q$ is compact if $q<p$.
\end{lemma}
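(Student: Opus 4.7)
The plan is to exploit the explicit orthonormal bases of the Hermite-Sobolev spaces to realize the inclusion as an operator-norm limit of finite-rank operators. For each $r \in \R$, the rescaled Hermite functions $\{(2|k|+d)^{-r} h_k\}_k$ form an orthonormal basis of $\Sc_r$; equivalently, every $f \in \Sc_p$ has the expansion $f = \sum_k \inpr[]{f}{h_k} h_k$ with
\[
\|f\|_p^2 = \sum_k (2|k|+d)^{2p} |\inpr[]{f}{h_k}|^2, \qquad \|f\|_q^2 = \sum_k (2|k|+d)^{2q} |\inpr[]{f}{h_k}|^2.
\]
Thus the inclusion $i : \Sc_p \to \Sc_q$ is realized as a diagonal multiplier with weights $(2|k|+d)^{q-p}$ relative to the respective Hermite bases.

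Next, for each $N \in \N$, I would introduce the finite-rank truncation $P_N : \Sc_p \to \Sc_q$ defined by $P_N f = \sum_{|k| \leq N} \inpr[]{f}{h_k} h_k$. Its image lies in the span of finitely many $h_k$, so $P_N$ is finite-rank, hence compact. For the tail, the key computation is
\[
\|(i - P_N) f\|_q^2 = \sum_{|k| > N} (2|k|+d)^{2q} |\inpr[]{f}{h_k}|^2 = \sum_{|k|>N} (2|k|+d)^{2(q-p)} \cdot (2|k|+d)^{2p}|\inpr[]{f}{h_k}|^2.
\]
Pulling out the supremum and using $q - p < 0$, this is bounded by $(2(N+1)+d)^{2(q-p)} \|f\|_p^2$.

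Therefore $\|i - P_N\|_{\mathrm{op}} \leq (2(N+1)+d)^{q-p} \to 0$ as $N \to \infty$, since the exponent is negative. The inclusion $i$ is the operator-norm limit of finite-rank operators, hence compact. There is no serious obstacle here; the only thing to be mindful of is that one cannot in general claim $i$ is Hilbert-Schmidt (the trace sum $\sum_k (2|k|+d)^{2(q-p)}$ converges only when $2(p-q) > d$), but operator-norm approximation by the truncations $P_N$ is cleaner and suffices for compactness.
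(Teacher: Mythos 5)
Your proposal is correct and follows essentially the same route as the paper: approximate the inclusion by the finite-rank spectral truncations onto the span of $\{h_k : |k| \leq N\}$, bound the tail in $\|\cdot\|_q$ by $(2N+d)^{2(q-p)}\|f\|_p^2$, and conclude compactness from operator-norm convergence of finite-rank operators. The side remark about the Hilbert--Schmidt threshold $2(p-q)>d$ is accurate but not needed.
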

\begin{proof}
 Any $x \in \Sc_p$ has an expansion $\sum_{ j = 0 }^\infty \sum_{k : |k| = j} \inpr{x}{ h_k } h_k $
   . Then we consider the projection map $T_n:\Sc_p \to \Sc_q$ on the span of $\{h_k : 0 \leq |k| \leq n\}$   as follows,
    \[T_n(x)=\sum_{j = 0}^n \sum_{k : |k| = j}\inpr{x}{h_k}h_k,\; \forall x\in \Sc_p\]
    Since $T_n$'s are finite rank operators, then compact. 
    Now
    \begin{align*}
        \|T_n(x) - i(x) \|_q^2=&\sum_{j = n+1}^\infty \sum_{k : |k| = j}\inpr[]{x}{h_k}^2\|h_k\|_q^2\\
        =&\sum_{j = n+1}^\infty \sum_{k : |k| = j}\inpr[]{x}{h_k}^2(2|k|+d)^{2q}\\
         =&(2n+d)^{-2(p-q)}\sum_{j = n+1}^\infty \sum_{k : |k| = j}\inpr[]{x}{h_k}^2(2|k|+d)^{2q}(2n+d)^{2(p-q)}\\
          \leq &(2|n|+d)^{-2(p-q)}\sum_{j = n+1}^\infty \sum_{k : |k| = j}\inpr[]{x}{h_k}^2(2|k|+d)^{2q}(2|k|+d)^{2(p-q)}\\
            = &(2n+d)^{-2(p-q)}\sum_{j = n+1}^\infty \sum_{k : |k| = j}\inpr[]{x}{h_k}^2(2|k|+d)^{2p}\\
             \leq &(2n+d)^{-2(p-q)}\|x\|_p^2.
    \end{align*}
    Therefore, $\frac{\|T_n(x)- i(x) \|_q}{\|x\|_p} \leq \frac{1}{(2n+d)^{p-q}},\; \forall (0\neq) x\in \Sc_p$ and hence, we have $T_n\to i$ as $n\to \infty$. Therefore, using \cite[Corollary 6.2]{MR2759829}, the embedding is compact.
\end{proof}

\begin{remark}
    The Identity operator $Id: \Sc_p \to \Sc_p$ can not be compact \cite[Theorem 6.5 (Riesz)]{MR2759829}
\end{remark}

\begin{proof}[Proof of Theorem \ref{unique-in-q-2}]
    For any $R > 0$, we have
    \[ \pp(\|X_t^x\|_{p-2}\geq R)\leq \frac{1}{R^2} \Exp\|X_t^x\|^2_{p-2},\]
and hence by \eqref{L2-norm-bound} and using $2\alpha > C_0$, we have
\[  \frac{1}{T}\int^T_0\pp(\|X_t^x\|_{p-2}\geq R)dt \leq \frac{1}{TR^2}\int^T_0 \Exp\|X_t^x\|^2_{p-2}dt \leq \frac{\left\| x \right\|_{p-2}^2}{R^2} \frac{1}{T}\int^T_0  e^{(C_0-2\alpha)t}dt \leq \frac{\left\| x \right\|_{p-2}^2}{R^2}.\]
Hence,
\[    \lim_{R\to \infty}\sup_T \frac{1}{T}  \int^T_0\pp(\|X_t^x\|_{p-2}\geq R)dt=0\]
 Hence, given any $\epsilon >0$, there exists an $R_\epsilon\geq 0$ such that 
 \[\sup_T \frac{1}{T}  \int^T_0 \pp(\|X_t^x\|_{p-2}\geq R_\epsilon)dt<\epsilon\]
 By Lemma \ref{compact-emmbadding}, the embedding $\Sc_{p-2} \hookrightarrow \Sc_{q-2}$ is compact for $q<p$, and hence the set $\{x\in \Sc_{p-2}:\|x\|_{p-2}\leq R_\epsilon\}$ is compact in $\Sc_{q-2}$. There exists an invariant measure $\mu $  on $\Sc_{q-2}$ for SPDE \eqref{SPDE}   by Prokhorov's theorem \cite[Corollary 7.4]{MR2560625}  and $\mu$ is unique using arguments similar to \cite[Theorem 7.13]{MR2560625}. Existence of a strong solution $\{X_t, t \geq 0\}$ of \eqref{SPDE} with $X_0 \sim \mu$ follows from the Monotonicity inequality for the pair $(L - \alpha, A)$ in $\|\cdot\|_{q - 2}$ (see \cite[Theorem 1]{MR2479730}).
\end{proof}

\subsection*{Acknowledgement:}  Arvind Kumar Nath would like to acknowledge the fact that he was supported
 by the University Grants Commission (Government of India) Ph.D research Fellowship.

    \bibliographystyle{plain}
\bibliography{ref}
\end{document}